\documentclass[preprint]{elsarticle}
\usepackage{amsmath, amssymb, amsthm}

\newcommand{\scrR}{\mathcal{R}}
\newcommand{\scrP}{\mathcal{P}}
\newcommand{\scrL}{\mathcal{L}}

\newcommand{\bbC}{\mathbb{C}}

\newcommand{\bbF}{\mathbb{F}}

\newcommand{\rank}{\text{rank}}
\newcommand{\tr}{\text{tr}}
\DeclareMathOperator\h{\mathsf{H}}

\DeclareMathOperator\q{\mathsf{Q}}

\DeclareMathOperator\sfO{\mathsf{O}}
\DeclareMathOperator\sfF{\mathsf{F}}
\DeclareMathOperator\sfI{\mathsf{I}}

 \numberwithin{equation}{section}
 \newtheorem{theorem}[equation]{Theorem}
 
 \newtheorem{lemma}[equation]{Lemma}
 \newtheorem{prop}[equation]{Proposition}
 \theoremstyle{definition}
 \newtheorem{definition}[equation]{Definition}
 \theoremstyle{remark}
 \newtheorem{rem}[equation]{Remark}

\journal{Journal of Combinatorial Theory A}

\begin{document}

\begin{frontmatter}



\title{New Bounds for Partial Spreads of $\h(2d-1, q^2)$ and Partial Ovoids of the Ree-Tits Octagon}

\author{Ferdinand Ihringer}
\ead{Ferdinand.Ihringer@gmail.com}
\address{Einstein Institute of Mathematics, The Hebrew University of Jerusalem, Givat Ram, Jerusalem, 9190401, Israel}

\author{Peter Sin}
\ead{sin@ufl.edu}
\address{Peter Sin, Department of Mathematics, University of Florida, Gainesville, FL 32611, USA}

\author{Qing Xiang}
\ead{qxiang@udel.edu}
\address{Qing Xiang, Department of Mathematical Sciences, University of Delaware, Newark, DE 19716, USA}

\begin{abstract}
Two results are obtained that give upper bounds on partial spreads and partial ovoids respectively.

The first result is that the size of a partial spread of the
Hermitian polar space $\h(3, q^2)$ is at most $\left(\frac{2p^3+p}{3} \right)^t+1$, where $q=p^t$, $p$ is a prime. For fixed $p$ this bound is in $o(q^3)$, which is asymptotically better than the previous best known bound of $(q^3+q+2)/2$. Similar bounds for partial spreads of $\h(2d-1, q^2)$, $d$ even, are given.
 
The  second result is that the size of a partial ovoid of the Ree-Tits octagon $\sfO(2^t)$ is at most  $26^t+1$. This bound, in particular, shows that the Ree-Tits octagon $\sfO(2^t)$ does not have an ovoid.
\end{abstract}

\begin{keyword}
Association scheme \sep generalized $n$-gon \sep Hermitian polar space \sep oppositeness graph \sep ovoid \sep partial ovoid \sep $p$-rank \sep partial spread \sep spread \sep the Ree-Tits octagon


\MSC 51E12 \sep 51E23 \sep 51E24
\end{keyword}

\end{frontmatter}

\section{Introduction}

Determining clique numbers of graphs is a traditional topic of combinatorics. Partial spreads of polar spaces are cliques of the oppositeness graph defined on the generators of the polar space. While bounds for partial spreads and partial ovoids are a long studied topic since Thas \cite{Thas1981} popularized the problem, many questions remain open. 
The purpose of this paper is to address two of these open questions.

Our first result gives an improved bound on the size of partial spreads of 
certain Hermitian polar spaces.
An \emph{Hermitian polar space}  $\h(2d-1, q^2)$, for $q=p^t$ a prime power, is the incidence geometry arising from a non-degenerate Hermitian form $f$ of $\bbF_{q^2}^{2d}$. Here the flats of $\h(2d-1, q^2)$ consist of all nonzero totally isotropic subspaces of $\bbF_{q^2}^{2d}$ with respect to the form $f$; incidence is the inclusion relation of the flats of $\h(2d-1, q^2)$. The maximal totally isotropic subspaces of $\bbF_{q^2}^{2d}$ with respect to the form $f$ are called \emph{generators} of $\h(2d-1, q^2)$. A  \emph{partial spread} of $\h(2d-1, q^2)$ is a set of pairwise disjoint generators of $\h(2d-1, q^2)$. A simple double counting argument shows that a partial spread of $\h(2d-1, q^2)$  has size at most $q^{2d-1}+1$. 
When $d$ is odd, a better upper bound of $q^d+1$ is known \cite{Vanhove2009}, and partial spreads of that size exist \cite{Aguglia2003}, \cite{Luyckx2008}.
So we are interested in the case when $d$ is even.
 
\begin{theorem}\label{thm_new_hermitian}
  Let $q = p^t$ with $p$ prime and $t\geq 1$.
  Let $Y$ be a partial spread of $\h(2d-1, q^2)$, where $d$ is even.
  \begin{enumerate}[(a)]
   \item If $d = 2$, then $|Y| \leq \left(\frac{2p^3+p}{3} \right)^t+1$.
   \item If $d = 2$ and $p=3$, then $|Y| \leq 19^t$.
   \item If $d > 2$, then $|Y|\leq \left(p^{2d-1} - p\frac{p^{2d-2}-1}{p+1}\right)^t+1$.
  \end{enumerate}
\end{theorem}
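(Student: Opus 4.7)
The plan is to apply the standard $p$-rank clique bound to the oppositeness graph of $\h(2d-1,q^2)$. Let $A$ be the adjacency matrix of the oppositeness graph $\Gamma$ on generators, so that a partial spread $Y$ is precisely a clique in $\Gamma$. The principal submatrix $A[Y]$ is then $J-I$, whose $\bbF_p$-rank equals $|Y|$ when $|Y|\not\equiv 1\pmod p$ and $|Y|-1$ otherwise. This yields
$$|Y|\leq \rank_{\bbF_p}(A)+\varepsilon,\qquad \varepsilon\in\{0,1\},$$
where $\varepsilon=1$ only if $|Y|\equiv 1\pmod p$. The problem thus splits into two tasks: bounding $\rank_{\bbF_p}(A)$, and analyzing the mod-$p$ residue in the extremal case.

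For the rank bound I would exploit the action of $\GRu_{2d}(q)$ on the permutation $\bbF_p$-module on generators and locate the image of $A$ within its composition series. For $d>2$ a general submodule-containment argument should yield
$$\rank_{\bbF_p}(A)\leq p^{2d-1}-p\cdot\tfrac{p^{2d-2}-1}{p+1}\qquad(\text{at }q=p),$$
the expression appearing in (c). For $d=2$ the target value $p(2p^2+1)/3$ is strictly smaller than $p^3-p^2+p$, so one must use finer modular composition-factor data for $\GRu_4(p)$ to cut out the extra summand; this is the technical heart of the argument.

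To pass from $q=p$ to $q=p^t$, the plan is to use the Frobenius-twist/tensor-product structure of the relevant modules. After an appropriate Galois descent, the oppositeness operator for $\h(2d-1,q^2)$ decomposes as a tensor product of $t$ Frobenius-twisted copies of its level-$1$ analogue, so $p$-ranks are multiplicative and $\rank_{\bbF_p}(A)=r_1^t$, where $r_1$ is the single-level rank. Combined with the clique inequality above, this gives $|Y|\leq r_1^t+1$, proving (a) and (c).

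Part (b) comes from a mod-$p$ parity refinement. When $p=3$ and $d=2$ the single-level rank is $r_1=19\equiv 1\pmod 3$, so $r_1^t+1\equiv 2\pmod 3$. But $|Y|=r_1^t+1$ would force $\varepsilon=1$, i.e.\ $|Y|\equiv 1\pmod 3$, contradicting $|Y|\equiv 2\pmod 3$. Hence $|Y|\leq 19^t$. The main obstacle throughout is the $d=2$ rank calculation, which demands substantially more detailed modular representation theory for $\GRu_4$ than the generic $d>2$ submodule bound.
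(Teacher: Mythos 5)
Your outer framework is exactly the paper's: the clique bound $|Y|\le\rank_p(A)+\varepsilon$ with $\varepsilon=1$ only when $|Y|\equiv 1\pmod p$, the reduction $\rank_p(A(q))=\rank_p(A(p))^t$ via Steinberg's tensor product theorem, and the mod~$3$ refinement for part (b) (your contradiction argument for $p=3$ is correct and is precisely the paper's closing remark that $\frac{2p^3+p}{3}$ is divisible by $p$ unless $p=3$). However, the two rank statements on which everything rests are asserted, not proved, and you yourself flag the $d=2$ computation as ``the technical heart'' without supplying it. That is a genuine gap: nothing in your proposal produces the value $\frac{2p^3+p}{3}$, nor establishes the bound $p^{2d-1}-p\frac{p^{2d-2}-1}{p+1}$ for $d>2$. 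For $d=2$ the paper does not work directly with composition factors of the permutation module for $\GRu_4(p)$; it observes that $\h(3,p^2)$ is dual to $\q^-(5,p)$, so the oppositeness matrix of lines of $\h(3,p^2)$ is the oppositeness matrix of points of $\q^-(5,p)$, whose $p$-rank is the dimension of the simple module of highest weight $(p-1)\omega_1$ for a simply connected group of type $D_3$; that dimension, $\frac{2p^3+p}{3}$, is taken from Arslan--Sin. Without this identification (or an equivalent explicit modular computation), part (a) is unsubstantiated.

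For $d>2$ the paper's argument is much more elementary than the ``general submodule-containment argument'' you gesture at, and you would do well to make it concrete: using $Q_{id}=f_d(-q)^{-i}$ one checks that for $d$ even the matrix $np^{d-1}E_d$ is integral and satisfies $A_d\equiv np^{d-1}E_d\pmod p$ (the coefficient of $A_d$ is $f_d/p\equiv 1\pmod p$ and all other coefficients vanish mod $p$), whence $\rank_p(A_d)\le\rank(E_d)=f_d=p^{2d-1}-p\frac{p^{2d-2}-1}{p+1}$. This is a short congruence computation with the primitive idempotent of the association scheme, not a representation-theoretic composition-series analysis. As written, your proposal is a correct plan whose two load-bearing steps are missing; filling them requires either the specific citations the paper uses or genuinely new computations.
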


The previous best known bound is in $\Theta(q^{2d-1})$ \cite{DeBeule2008,Ihringer2014}, while even for $d=2$ the largest known examples only have size $\Theta(q^2)$ \cite{Coolsaet2014}. Here we provide the first upper bound which is in $o(q^{2d-1})$ for fixed $p$. 
For $d=2$ the previous best known bound is $(q^3+q+2)/2$ \cite{DeBeule2008}.
An easy calculation shows that this old bound is better than the bound in part (a) of Theorem \ref{thm_new_hermitian} if $p=2$ and $t \leq 2$ or if $t=1$. But for fixed $p$ (and let $q=p^t$), the bound in part (a) of Theorem \ref{thm_new_hermitian} is in $o(q^3)$, which is asymptotically better than the bound of $(q^3+q+2)/2$. For $d>2$ the new bound improves all previous bounds if $t > 1$.

Our second result is a bound on the size of a partial ovoid in the \emph{Ree-Tits octagons}. 
A {\it generalized $n$-gon} of order $(s, r)$ is a triple $\Gamma = (\scrP, \scrL, \sfI)$, where elements of $\scrP$ are called \emph{points}, 
elements of $\scrL$ are called \emph{lines}, and $\sfI \subseteq \scrP \times \scrL$ is an \emph{incidence relation} between the points and lines,
which satisfies the following axioms \cite{vanMaldeghem1998}:
\begin{enumerate}[(a)]
 \item Each line is incident with $s+1$ points.
 \item Each point is incident with $r+1$ lines.
 \item The \emph{incidence graph} has diameter $n$ and girth $2n$.
\end{enumerate}
Here the incidence graph is the bipartite graph with $\scrP \cup \scrL$ as vertices, $p\in \scrP$ and $\ell\in\scrL$ are adjacent if $(p,\ell)\in \sfI$.

The only known thick finite generalized octagons are the \emph{Ree-Tits octagons} $\sfO(2^t)$, $t$ odd, defined in \cite{Tits1983}, and their duals.

A \emph{partial ovoid} of a generalized $n$-gon $\Gamma$ is a set of points pairwise at distance $n$ in the incidence graph. 
An easy counting argument shows that the size of  a partial ovoid of a generalized octagon of order $(s, r)$ is at most $(sr)^2+1$. A partial ovoid of a generalized octagon of order $(s, r)$ is called an {\it ovoid} if it has the maximum possible size $(sr)^2+1$. The Ree-Tits octagon $\sfO(2^t)$ is a generalized octagon of order $(2^t, 4^t)$, so the size of an ovoid is $64^t+1$.
We shall establish the following bound.

\begin{theorem}\label{thm_new_reetits}
  The size of a partial ovoid of the Ree-Tits octagon $\sfO(2^t)$, $t$ odd, 
is at most $26^t+1$.
\end{theorem}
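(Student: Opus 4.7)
The plan is to use a characteristic~$2$ rank-bound argument, parallel in spirit to the one behind Theorem~\ref{thm_new_hermitian}. Let $M$ be the $0/1$ matrix whose rows and columns are indexed by the points of $\sfO(2^t)$, with $M_{x,y}=1$ if and only if $x$ and $y$ are opposite, that is, at distance $8$ in the incidence graph. If $Y$ is a partial ovoid, the principal submatrix $M[Y,Y]$ equals $J-I$; an elementary computation over $\bbF_2$ shows that $J-I$ has rank $|Y|-1$ when $|Y|$ is odd and rank $|Y|$ when $|Y|$ is even. In either case
\[
  |Y|\le \rank_{\bbF_2}(M)+1,
\]
so the theorem reduces to proving $\rank_{\bbF_2}(M)\le 26^t$.

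To control $\rank_{\bbF_2}(M)$, I would exploit the action of the Ree group $G={}^2F_4(2^t)$ on the points of $\sfO(2^t)$. The $\bbF_2$-row space of $M$ is a $G$-submodule of the permutation $\bbF_2G$-module $\bbF_2^{\scrP}$, and by the Steinberg tensor product theorem for the simply-connected algebraic group of type $F_4$ over $\overline{\bbF_2}$ (together with the Tits endomorphism defining ${}^2F_4$), this submodule should decompose as a tensor product of $t$ successive Frobenius twists of the analogous submodule arising in the base case $t=1$. Consequently $\rank_{\bbF_2}(M)$ is multiplicative in $t$, and the argument reduces to verifying that this rank equals $26$ when $t=1$.

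The main obstacle is precisely this base case: computing the $\bbF_2$-rank of the point-oppositeness matrix of $\sfO(2)$, which has $1755$ points. The target value $26$ matches the dimension of the natural module for $F_4$ in characteristic~$2$ coming from the exceptional Jordan algebra, the very module Tits used to construct $\sfO(2^t)$. My strategy would be to identify the $\bbF_2$-row space of $M$ with this $26$-dimensional simple $\bbF_2G$-submodule of $\bbF_2^{\scrP}$: locate the simple submodule inside the composition series of $\bbF_2^{\scrP}$, then verify that the image of $M$ is contained in it and has full rank $26$, using the semisimple (characteristic zero) Bose--Mesner decomposition of the oppositeness relation as a guide to which irreducible components survive modulo~$2$. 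A direct rank computation on the $1755\times 1755$ matrix would serve as a fallback. Combining the base case with the Steinberg reduction yields $\rank_{\bbF_2}(M)\le 26^t$, hence $|Y|\le 26^t+1$. Since a genuine ovoid would require $|Y|=(2^t\cdot 4^t)^2+1=64^t+1>26^t+1$ for all $t\ge 1$, this bound in particular rules out the existence of ovoids in $\sfO(2^t)$.
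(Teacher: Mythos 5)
Your proposal follows essentially the same route as the paper: the clique/$p$-rank bound, reduction to $t=1$ via Steinberg's tensor product theorem (this is exactly \cite[Prop.~5.2]{Sin2012}, quoted as Proposition~\ref{prop:prank_nonprime}), and identification of the $\bbF_2$-row space of the oppositeness matrix of $\sfO(2)$ with the $26$-dimensional simple $\sfF_4$-module of highest weight $\omega_4$, whose dimension the paper takes from Veldkamp's tables (or, as you also suggest, from direct computation). The only cosmetic difference is that you fold the two cases of Lemma~\ref{lem:prank_bound} into the single weaker bound $|Y|\le\rank_{\bbF_2}(M)+1$, which still suffices for the stated theorem.
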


Theorem~\ref{thm_new_reetits} shows that the Ree-Tits octagon $\sfO(2^t)$ does not have an ovoid.  This had been conjectured by Coolsaet and Van Maldeghem \cite[p.~108]{Coolsaet2000}. For $\sfO(2)$, it was already shown \cite{Coolsaet2000} that a partial ovoid has at most 27 points, while for $t\geq 3$ no bounds better than the ovoid size were known. 
A computer search with the mixed-integer linear programming solver Gurobi showed that for $\sfO(2)$ the largest partial ovoid has size $24$, so the bound of Theorem \ref{thm_new_reetits} is not tight.
Note an earlier version of this document claimed that the Ree-Tits octagon, based on \cite[p. 582]{Parkinson2015}, does not possess a (non-trivial) line-domestic collineation.
This is corrected in \cite[Remark 2.3]{Parkinson2020}.

Both theorems are proved using the same approach based on the following simple and well known observation.

\begin{lemma}\label{lem:prank_bound}
  Let $(X, \sim)$ be a graph. Let $A$ be the adjacency matrix of $X$. 
  Let $Y$ be a clique of $X$. Then
  \begin{align*}
    |Y| \leq 
    \begin{cases}
     \rank_p(A) + 1, & \text{ if } p \text{ divides } |Y| - 1,\\
     \rank_p(A), & \text{ otherwise.}
    \end{cases}
  \end{align*}
\end{lemma}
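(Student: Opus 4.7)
My approach is to restrict $A$ to the clique $Y$ and compute the $\bbF_p$-rank of the resulting principal submatrix directly. Write $k = |Y|$. Since $Y$ is a clique and the graph has no loops, the $k \times k$ principal submatrix $A_Y$ of $A$ indexed by the vertices of $Y$ has $0$ on the diagonal and $1$ off the diagonal, so $A_Y = J_k - I_k$, where $J_k$ denotes the all-ones matrix and $I_k$ the identity. Because $A_Y$ is a submatrix of $A$, standard linear algebra gives $\rank_p(J_k - I_k) \leq \rank_p(A)$, so the problem reduces to computing the left-hand side.

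To determine $\rank_p(J_k - I_k)$, I would describe its kernel directly over $\bbF_p$. If $v \in \bbF_p^k$ satisfies $(J_k - I_k)v = 0$, then $J_k v = v$; but $J_k v = (\sum_i v_i)\mathbf{1}$ is always a scalar multiple of the all-ones vector $\mathbf{1}$, which forces $v = c \mathbf{1}$ for some $c \in \bbF_p$. Substituting back yields $c(k-1)\mathbf{1} = 0$, so the kernel is trivial when $p \nmid k-1$ and is the one-dimensional line spanned by $\mathbf{1}$ when $p \mid k-1$. Hence $\rank_p(J_k - I_k)$ equals $k$ in the first case and $k-1$ in the second, and combining with the inequality from the previous paragraph yields exactly the two cases of the lemma.

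There is essentially no serious obstacle. The only pitfall worth flagging is that everything must be done over $\bbF_p$: the familiar complex eigenvalue decomposition of $J_k - I_k$, with eigenvalues $k-1$ and $-1$, correctly suggests the rank formula, but the direct kernel computation above is what actually justifies it in positive characteristic.
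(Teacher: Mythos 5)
Your proof is correct and follows essentially the same route as the paper: both restrict $A$ to the clique to obtain $J-I$, compute its $p$-rank (you via an explicit kernel computation, the paper by direct assertion), and conclude via $\rank_p(A_Y) \leq \rank_p(A)$.
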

\begin{proof}
  Let $J$ be the all-ones matrix of size $|Y| \times |Y|$.
  Let $I$ be the identity matrix of size $|Y| \times |Y|$.
  As $Y$ is a clique, the submatrix $A'$ of $A$ indexed by $Y$ is $J-I$.
  Hence, the submatrix has $p$-rank $|Y|-1$ if $p$ divides $|Y|-1$, and it has $p$-rank $|Y|$ if $p$ does not divide $|Y|-1$. As $\rank_p(A') \leq \rank_p(A)$,
  the assertion follows.
\end{proof}

The feasibility of applying Lemma~\ref{lem:prank_bound}  depends, 
of course, on the ability to compute or to bound the $p$-ranks of the adjacency matrix
in specific cases. In our cases, representation theory in characteristic
$p$ furnishes the necessary computations. 
The oppositeness graphs that we shall consider are  defined by maximal distance
in  association schemes arising from the Hermitian dual polar spaces and 
generalized octagons.  These maximal distance relations are  specific examples
of the general notion of oppositeness for flags in Tits buildings. It was
shown in \cite{Sin2012} that if $G$ is a finite group of Lie type 
in characteristic $p$, then an oppositeness relation in the associated building
defines a $G$-module homomorphism (in characteristic $p$) whose image is a 
certain simple module for $G$ and for a related semisimple algebraic group.
Crucially, the calculation of the dimension of this module
can be reduced to the prime field case (see Proposition~\ref{prop:prank_nonprime}).
In the prime case computations have been made for many simple modules, 
including those needed for this paper (\cite{Arslan2011}, \cite{Veldkamp1970}),  
as part of the representation theory of semisimple algebraic groups. In this way, we obtain the $p$-ranks of the oppositeness graphs 
for $\sfO(2^t)$ and $\h(3, q^2)$.  In the other cases of
$\h(2d-1, q^2)$ for $d>2$, a bound on the $p$-rank is obtained by relating the oppositeness matrix with an idempotent matrix of the association scheme. 

\section{Two Association Schemes}
A complete introduction to association schemes can be found in \cite[Ch. 2]{Brouwer1989}.

\begin{definition}
  Let $X$ be a finite set of size $n$. An \textit{association scheme} with $d+1$ classes is a pair $(X, \scrR)$, where $\scrR = \{ R_0, \ldots, R_d \}$ is a set of symmetric binary relations on $X$ with the following properties:
  \begin{enumerate}[(a)]
    \item $\scrR$ is a partition of $X \times X$.
    \item $R_{0}$ is the identity relation.
    \item There are numbers $p_{ij}^k$ such that for $x, y \in X$ with $x R_k y$ there are exactly $p_{ij}^k$ elements $z$ with $x R_i z$ and $z R_j y$.
  \end{enumerate}
\end{definition}

The relations $R_i$ are described by their \textit{adjacency matrices} $A_i \in \bbC^{n,n}$ defined by
\begin{align*}
  (A_i)_{xy} = \begin{cases}
                 1, & \text{ if } x R_i y,\\
                 0, & \text{ otherwise.}
               \end{cases}
\end{align*}
In this paper the matrix $A_d$ is referred to as the \emph{oppositeness matrix}.
Denote the all-ones matrix by $J$.
There exist (see \cite[p. 45]{Brouwer1989}) idempotent Hermitian matrices $E_j \in \bbC^{n,n}$ with the properties
\begin{align*}
\begin{array}{lll}\displaystyle
\sum_{j=0}^d E_j = I, & \hspace*{2cm} &\displaystyle E_{0} = n^{-1} J,\\
\displaystyle A_j = \sum_{i=0}^d P_{ij} E_i, &  &\displaystyle E_j = \frac{1}{n} \sum_{i=0}^d Q_{ij} A_i,
  \end{array}
\end{align*}
where $P = (P_{ij}) \in \bbC^{d+1,d+1}$ and $Q = (Q_{ij}) \in \bbC^{d+1,d+1}$ are the so-called eigenmatrices of the association scheme. 
The $P_{ij}$ are the eigenvalues of $A_j$.
The multiplicity $f_i$ of $P_{ij}$ satisfies
\begin{align*}
  f_i = \rank(E_i) = \tr(E_i) = Q_{0i}.
\end{align*}

In this paper we consider the association schemes corresponding to the dual polar graph of $\h(2d-1, q^2)$ and the Ree-Tits octagon.
For the dual polar graph of $\h(2d-1, q^2)$ we have the following situation. Here $X$ is the set of generators of $\h(2d-1, q^2)$ and two generators $a, b$ are in relation $R_i$, where $0\leq i\leq d$, if and only if $a$ and $b$ intersect in codimension $i$. The dual polar graph of $\h(2d-1, q^2)$ has diameter $d$ and is distance regular.

As in \cite{Ihringer2014}, for the dual polar graph of $\h(2d-1, q^2)$ we obtain
\begin{align*}
 f_d = q^{2d} \frac{q^{1-2d}+1}{q+1} = q^{2d-1} - q \frac{q^{2d-2}-1}{q+1},
\end{align*}
and
\begin{align*}
  Q_{id} = \frac{P_{di}}{P_{0i}} Q_{0d} = f_d (-q)^{-i}.
\end{align*}

For the Ree-Tits octagon $\sfO(2^t)$ we consider the following association scheme: the set $X$ is the set of points of the octagon,
and two points $a$ and $b$ are in relation $R_i$, where $0\leq i\leq 4$, if their distance in the incidence graph of $\sfO(2^t)$ is $2i$. In this case $A_4$ is the oppositeness
matrix. 

\section{$p$-Ranks of the Oppositeness Matrices}
The following result reduces the computation of $p$-ranks of oppositeness matrices
to the case of the prime field.  It is an application of 
Steinberg's Tensor Product Theorem.  
 
\begin{prop}[{Sin \cite[Prop. 5.2]{Sin2012}}]\label{prop:prank_nonprime}
Let $G(q)$, $q=p^t$ a prime power, be a finite group of Lie type and let $A(q)$ denote the oppositeness matrix for objects of a fixed self-opposite type 
in the building of $G(q)$. 
Then
  \begin{align*}
    \rank_p(A(q)) = \rank_p(A(p))^t.
  \end{align*}
\end{prop}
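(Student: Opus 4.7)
The plan is to identify $\rank_p(A(q))$ with the dimension of a specific simple $G(q)$-module in characteristic $p$, and then to exploit Steinberg's Tensor Product Theorem.

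First, I would invoke the main construction of \cite{Sin2012}: the oppositeness relation on flags of a self-opposite type $J$ defines a $G(q)$-equivariant linear map between the corresponding permutation modules in characteristic $p$ whose image is isomorphic to the simple module $L((q-1)\lambda_J)$. Here $L(\mu)$ denotes the simple module of highest weight $\mu$ for the ambient simply-connected semisimple algebraic group, and $\lambda_J$ is a fixed dominant weight (a sum of certain fundamental weights) depending only on $J$ and the root datum of $G$. Because the rank of the oppositeness map equals the dimension of its image, this identifies $\rank_p(A(q))$ with $\dim L((q-1)\lambda_J)$.

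Next, since $q = p^t$, the base-$p$ expansion of $q-1$ gives
\begin{equation*}
(q-1)\lambda_J = \sum_{i=0}^{t-1} p^i (p-1)\lambda_J,
\end{equation*}
and the weight $(p-1)\lambda_J$ is $p$-restricted, because each fundamental weight occurs in it with coefficient $0$ or $p-1 < p$. Steinberg's Tensor Product Theorem then provides an isomorphism
\begin{equation*}
L\big((q-1)\lambda_J\big) \;\cong\; \bigotimes_{i=0}^{t-1} L\big((p-1)\lambda_J\big)^{(p^i)},
\end{equation*}
where $(-)^{(p^i)}$ denotes the $i$-fold Frobenius twist. Since Frobenius twists preserve dimension, passing to dimensions gives
\begin{equation*}
\rank_p(A(q)) \;=\; \dim L\big((q-1)\lambda_J\big) \;=\; \big(\dim L\big((p-1)\lambda_J\big)\big)^t \;=\; \rank_p(A(p))^t.
\end{equation*}

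The substantive content, and the main obstacle, lies entirely in the first step: establishing that the image of the oppositeness map is precisely the simple module $L((q-1)\lambda_J)$. One direction (that the simple module occurs as a quotient of the image) is handled by exhibiting a nonzero highest-weight vector that survives in the image; the opposite direction (that the image is not strictly larger) requires ruling out strictly higher weights, typically via analysis of the action of a maximal torus on the distinguished permutation basis together with the relationship between the oppositeness relation and the longest Weyl-group element $w_0$. Once this identification of the image is in hand, the remainder of the argument is the formal Steinberg calculation above.
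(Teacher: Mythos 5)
Your proposal is correct and follows essentially the same route as the paper, which does not reprove this statement but cites Sin's Proposition 5.2 and notes only that it is an application of Steinberg's Tensor Product Theorem: the rank is identified with $\dim L((q-1)\lambda_J)$ via the main theorem of that reference, and the exponent $t$ then comes from the Steinberg factorization of $L((q-1)\lambda_J)$ into Frobenius twists of $L((p-1)\lambda_J)$. You also correctly locate the substantive content in the identification of the image of the oppositeness map with the simple module (including its extension to the twisted groups such as ${}^2F_4$), which is precisely the part the paper outsources to the citation.
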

\qed
\begin{rem} The meaning of $q$ in Proposition~\ref{prop:prank_nonprime} needs to be explained. For (untwisted) Chevalley groups $q$ is simply the cardinality of the field used
to define the group.
In the case of Ree groups of type $\sfF_4$, there is a Steinberg endomorphism
$\tau$ of the algebraic group $\sfF_4$ (over an algebraic closure of $\bbF_2$)
such that the Ree group that we denote by $G(q)$ is the subgroup of fixed points
of $\tau$, and the subgroup of fixed points of $\tau^2$ is the Chevalley
group $\sfF_4(q)$, where $q$ is an odd power of $2$.  
In general, the relation between $G(q)$ and $q$ 
is that the highest weight of the simple module in \cite[Theorem 4.1]{Sin2012} 
is of the form $(q-1)\tilde\omega$, where $\tilde\omega$ is a sum of fundamental weights.
We refer to \S5 of \cite{Sin2012} for a detailed description. 
\end{rem}

\subsection{Proof of Theorem~\ref{thm_new_hermitian} }


\begin{lemma}\label{lem:prank_prime}
  \begin{enumerate}[(a)]
   \item The $p$-rank of the oppositeness matrix of lines of $\h(3, p^2)$ is $\frac{2p^3+p}{3}$.
   \item The $p$-rank of the oppositeness matrix of generators of $\h(2d-1, p^2)$, $d$ even, is at most $p^{2d-1} - p\frac{p^{2d-2}-1}{p+1}$.
  \end{enumerate}
\end{lemma}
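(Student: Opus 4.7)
The plan is to handle the two parts separately. For (a), I would invoke the representation-theoretic framework of \cite{Sin2012}: in the prime case, the $p$-rank of the oppositeness matrix on a set of flags of fixed self-opposite type is identified with the dimension of a specific simple module for the associated algebraic group, having highest weight of the form $(p-1)\tilde\omega$ for a fundamental weight $\tilde\omega$ determined by the flag type. For generators (lines) of $\h(3, p^2)$, the group is of type $A_3$ and $\tilde\omega$ is the middle fundamental weight $\omega_2$, which is the one fixed by the opposition involution. It then remains to compute $\dim L((p-1)\omega_2)$ in characteristic $p$, a modular representation-theoretic calculation (see \cite{Arslan2011} and \cite{Veldkamp1970}) that yields exactly $\frac{2p^3+p}{3}$.

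For (b), the plan is a direct linear-algebra argument using the explicit formula $Q_{id} = f_d (-p)^{-i}$ recalled in Section~2 (specialized to $q=p$). Define the integer matrix
$$ Y = \sum_{i=0}^d (-p)^{d-i} A_i. $$
Two observations about $Y$ together yield the bound. First, modulo $p$ every term with $i < d$ carries a positive power of $p$ and hence vanishes, so $Y \equiv A_d \pmod{p}$ and therefore $\rank_p(A_d) = \rank_p(Y)$. Second, multiplying the identity $n E_d = f_d \sum_{i=0}^d (-p)^{-i} A_i$ through by $(-p)^d$ gives $n(-p)^d E_d = f_d\, Y$; thus $Y$ is a nonzero rational multiple of $E_d$, so its rank over $\mathbb{Q}$ equals $\rank(E_d) = f_d$. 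Since $Y$ has integer entries, its $p$-rank is at most its rational rank, giving $\rank_p(A_d) \leq f_d = p^{2d-1} - p(p^{2d-2}-1)/(p+1)$.

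The main obstacle lies in (a). The characteristic-zero Weyl module with highest weight $(p-1)\omega_2$ in type $A_3$ has dimension $\frac{p(p+1)^2(p+2)}{12}$, but in characteristic $p \geq 3$ the simple quotient $L((p-1)\omega_2)$ is strictly smaller, so the exact identification of its dimension as $\frac{2p^3+p}{3}$ requires genuine modular input beyond the Weyl dimension formula and relies on the references above. Part (b), by contrast, is essentially automatic once one notices that the integer combination $\sum_{i=0}^d (-p)^{d-i} A_i$ does double duty: it is congruent to $A_d$ modulo $p$, and it is a scalar multiple of the rank-$f_d$ idempotent $E_d$, so its rational rank already caps $\rank_p(A_d)$.
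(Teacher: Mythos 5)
Your proposal is correct and follows essentially the same route as the paper: part (a) identifies the $p$-rank with the dimension of the simple module $L((p-1)\tilde\omega)$ via \cite{Sin2012} and quotes \cite{Arslan2011} for that dimension (your type-$A_3$, weight $(p-1)\omega_2$ description coincides with the paper's type-$D_3$, weight $(p-1)\omega_1$ description under $A_3\cong D_3$, the paper merely passing through the dual $\q^-(5,p)$), and part (b) bounds $\rank_p(A_d)$ by exhibiting an integer matrix congruent to $A_d$ modulo $p$ that is a scalar multiple of the rank-$f_d$ idempotent $E_d$. The only cosmetic difference is your choice of scaling $\sum_i(-p)^{d-i}A_i$ in place of the paper's $np^{d-1}E_d$; both give the same bound.
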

\begin{proof}
  Notice that $\h(3, p^2)$ is dual to $\q^-(5, p)$, so the oppositeness matrix
of lines  of of $\h(3, p^2)$ is the oppositeness matrix of points in $\q^-(5, p)$. 
  By \cite[Example 6.2]{Sin2012}, the $p$-rank of this oppositeness matrix is equal to the dimension of the simple module for a simply connected algebraic group
of type $D_3$ with highest weight equal to $(p-1)\omega_1$, under the standard parametrization of simple modules, where $\omega_1$ is the first fundamental weight. This dimension was calculated in \cite[Theorem 1.2]{Arslan2011}.
  Applying Theorem 1.2 of \cite{Arslan2011} with $\ell = 3$ and $r=p-1$, for the oppositeness matrix $A_2$ we obtain
  \begin{align*}
    \rank_p(A_2) &= \frac{2p^3+p}{3}.
  \end{align*}
  Part (a) of the lemma follows.
  
  For (b) notice that the matrix $E_d$ of the dual polar graph of $\h(2d-1, p^2)$ has rank
  \begin{align*}
    f_d = p^{2d-1} - p\frac{p^{2d-2}-1}{p+1}.
  \end{align*}
When $d$ even the matrix $n p^{d-1} E_d$ has only integer entries and we have $A_d \equiv n p^{d-1} E_d \mod p$. Hence,  
$\rank_p(A_d)=\rank_p(n p^{d-1} E_d)\leq\rank(n p^{d-1} E_d)=\rank(E_d) = p^{2d-1} - p\frac{p^{2d-2}-1}{p+1}$.
\end{proof}

Theorem \ref{thm_new_hermitian} now follows by combining Lemma~\ref{lem:prank_prime}, 
Proposition \ref{prop:prank_nonprime} and Lemma \ref{lem:prank_bound}. 
Notice here that $\frac{2p^3+p}{3}$ is divisible by $p$ unless $p=3$.
\qed

\subsection{Proof of  Theorem~\ref{thm_new_reetits}}
Let $\{\alpha_1, \alpha_2, \alpha_3, \alpha_4\}$ be a set of fundamental
roots for a root system of type $\sfF_4$, where $\alpha_1$ is a long root
at one end of the Dynkin diagram and $\alpha_4$ is the short root at the other end.
According to \cite[p. 282, para. 3]{Sin2012}, the $2$-rank of the oppositeness
matrix of $\sfO(2)$ is equal to the dimension of the simple module of the group $\sfF_4$ whose highest weight is the fundamental weight corresponding to $\alpha_4$. The dimension can be found in several ways. First, Veldkamp calculated all simple modules of the group $\sfF_4$ in characteristic $2$. (See \cite[Table 2]{Veldkamp1970}; the highest weight in question is denoted $d_4=0001$). Secondly, this module happens to be the modulo 2 reduction of the corresponding Weyl module in characteristic zero, so its dimension is also given by Weyl's Dimension Formula. Finally, it is also easy to compute the oppositeness matrix of $\sfO(2)$ and its 2-rank by computer. 
Hence, we obtain the following result.
\begin{lemma}\label{lem:prank_reetits}
  The $2$-rank of the oppositeness matrix of $\sfO(2)$ is $26$.
\end{lemma}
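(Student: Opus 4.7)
The plan is to apply the general framework of \cite{Sin2012}, already invoked above for the Hermitian case, to identify $\rank_2(A_4)$ with the dimension of a simple module for the algebraic group $\sfF_4$ in characteristic $2$, and then to compute that dimension. Concretely, points of $\sfO(2)$ correspond to a specific class of maximal parabolic subgroup of the Ree group ${}^2\sfF_4(2)$; combining Proposition~\ref{prop:prank_nonprime} with the identification of the image of an oppositeness homomorphism with a simple module from \cite[Theorem~4.1]{Sin2012}, one obtains $\rank_2(A_4) = \dim L(\tilde\omega)$ for a specific fundamental weight $\tilde\omega$ of $\sfF_4$. Matching the point-type of $\sfO(2)$ to the Dynkin diagram of $\sfF_4$, $\tilde\omega$ is the fundamental weight $\omega_4$ attached to the short root $\alpha_4$ at the end of the diagram.

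Next I would compute $\dim L(\omega_4)$ in characteristic $2$. The most transparent route is to note that the Weyl module $V(\omega_4)$ in characteristic zero is the classical $26$-dimensional representation of $\sfF_4$, a value one can reproduce from Weyl's Dimension Formula, and then to verify that $V(\omega_4)$ remains irreducible upon reduction mod $2$. If it does, then $\dim L(\omega_4) = 26$. Alternatively, the value can be read off directly from Veldkamp's tabulation \cite[Table~2]{Veldkamp1970} of irreducible modules of $\sfF_4$ in characteristic $2$, where the module in question is listed under the label $d_4 = 0001$.

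The main obstacle is the irreducibility assertion modulo $2$: since $2$ is a bad prime for $\sfF_4$, Weyl modules are not automatically simple in this characteristic, and this particular case really does require an input from the literature rather than a generic appeal to Weyl's formula. Veldkamp's analysis supplies exactly what is needed, so I would cite \cite{Veldkamp1970} for this step rather than reprove it. As an independent sanity check, the oppositeness matrix $A_4$ of $\sfO(2)$ is small enough that one can compute $\rank_2(A_4)$ directly on a computer; this simultaneously confirms the answer $26$ and validates the identification of the correct fundamental weight.
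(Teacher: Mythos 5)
Your proposal is correct and follows essentially the same route as the paper: identify the $2$-rank with the dimension of the simple $\sfF_4$-module of highest weight $\omega_4$ via the oppositeness framework of \cite{Sin2012}, and obtain the value $26$ from Veldkamp's tables (entry $d_4=0001$), with the Weyl module reduction and a direct computer calculation of $\rank_2(A_4)$ as corroborating checks. Your explicit caution that $2$ is a bad prime for $\sfF_4$, so the irreducibility of the reduced Weyl module genuinely needs Veldkamp's input, is a point the paper states only implicitly, but the substance of the argument is identical.
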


Theorem~\ref{thm_new_reetits} now follows by combining Proposition~\ref{prop:prank_nonprime}, Lemma~\ref{lem:prank_reetits} and Lemma~\ref{lem:prank_bound}.
\qed

\section{Final Remarks}
The association scheme technique used also work for partial ovoids of the twisted triality hexagon of order $(q, q^3)$, where we obtain the $p$-rank $\left((4p^5+p)/5\right)^t$,
but there a better bound of order $q^3+1$ is known \cite[Theorem 6.4.19]{Vanhove2011}.
For all other known generalized polygons $p$-ranks do not improve any known bounds for partial ovoids.

The bounds on partial spreads $\h(2d-1, q^2)$, $d > 2$ even, in Theorem \ref{thm_new_hermitian} could be improved by calculating exact $p$-ranks. We would expect the bounds
thus obtained to be much better than the ones given in Theorem~\ref{thm_new_hermitian}, 
were we to extrapolate from the case of  $\h(5, q^2)$. There, the corresponding $p$-rank is $\left((11p^5+5p^3+4p)/20\right)^t$, while an argument as in Lemma \ref{lem:prank_prime} (b) only yields $\left(p^5-p^4+p^3-p^2+p\right)^t$ as an upper bound. 
Unfortunately, calculating the exact $p$-rank seems to be too complicated. Notice however that this is a finite problem for fixed $p$ and fixed $d$ as one only has to calculate the $p$-rank for $t=1$ by Proposition \ref{prop:prank_nonprime}.

\paragraph*{\bf Acknowledgments} The first author thanks John Bamberg and Klaus Metsch for their remarks.
The second author was partially supported by a grant from the Simons Foundation (\#204181 to Peter Sin). The third author was partially supported by an NSF grant DMS-1600850.


\vspace{0.18in}

\end{document}